\documentclass{amsart}
 \usepackage{hyperref}
\newtheorem{theorem}{Theorem}[section]
\newtheorem*{thmmain}{Theorem}
\newtheorem{lemma}[theorem]{Lemma}
\newtheorem{proposition}[theorem]{Proposition}

\newtheorem{corollary}[theorem]{Corollary}
\theoremstyle{definition}

\theoremstyle{remark}
\newtheorem{remark}[theorem]{Remark}

\numberwithin{equation}{section}



\begin{document}

\title[Compact convex ancient solutions of the planar affine normal flow]
 {Classification of compact convex ancient solutions of the planar affine normal flow}

\author[M.N. Ivaki]{Mohammad N. Ivaki}
\address{Institut f\"{u}r Diskrete Mathematik und Geometrie, Technische Universit\"{a}t Wien,
Wiedner Hauptstr. 8--10, 1040 Wien, Austria}
\curraddr{}
\email{mohammad.ivaki@tuwien.ac.at}
\date{}

\dedicatory{}
\subjclass[2010]{Primary  53C44, 53A04, 52A10; Secondary 53A15}
\keywords{affine normal flow; affine
differential geometry; affine support function; ancient solutions.}

\begin{abstract}
We prove that the only compact convex ancient solutions of the planar affine normal flow are contracting ellipses.
\end{abstract}

\maketitle

\section{Introduction}
The setting of this paper is the two-dimensional Euclidean space, $\mathbb{R}^2.$ A compact convex subset of $\mathbb{R}^2$ with non-empty interior is called a \emph{convex body}. The set of smooth strictly convex bodies in $\mathbb{R}^2$ is denoted by $\mathcal{K}$ and write $\mathcal{K}_{0}$ for the set of smooth strictly convex bodies whose interiors contain the origin of the plane.

Let $K$ be a smooth strictly convex body in $\mathbb{R}^2$ and let $X_K:\partial K\to\mathbb{R}^2,$
be a smooth embedding of $\partial K$, the boundary of $K$. Write $\mathbb{S}^1$ for the unit circle and write $\nu:\partial K\to \mathbb{S}^1$ for the Gauss map of $\partial K$. That is, at each point $x\in\partial K$, $\nu(x)$ is the unit outwards normal at $x$.
The support function of $K\in\mathcal{K}_0 $ as a function on the unit circle is defined by
$s_K(z):= \langle X(\nu^{-1}(z)), z \rangle,$
for each $z\in\mathbb{S}^1$.
We denote the curvature of $\partial K$ by $\kappa$ which as a function on $\partial K$ is related to the support function by
 \[\frac{1}{\kappa(\nu^{-1}(z))}:=\mathfrak{r}(z)=\frac{\partial^2}{\partial \theta^2}s(z)+s(z).\]
Here and thereafter, we identify $z=(\cos \theta, \sin \theta)$ with $\theta$. The function $\mathfrak{r}$ is called the radius of curvature. The affine support function of $K$ defined by $\sigma:\partial K\to \mathbb{R}$ and $\sigma(x):=s(\nu(x))\mathfrak{r}^{1/3}(\nu(x))$ is invariant under the group of special linear transformations, $SL(2)$, and it plays a basic role in our argument.

Let $K\in \mathcal{K}$. A family of convex bodies $\{K_t\}_t\subset\mathcal{K}$ given by the embeddings $X:\partial K\times[0,T)\to \mathbb{R}^2$ is said to be a solution to the affine normal flow with the initial data $K_0$ if the following evolution equation is satisfied:
\begin{equation}\label{e: flow0}
 \partial_{t}X(x,t)=-\kappa^{\frac{1}{3}}(x,t)\, \nu(x,t),~~
 X(0,\cdot)=X_{K}.
\end{equation}
In this equation, $\nu(x,t)$ is the unit normal to the curve $ X(\partial K,t)=\partial K_t$ at $X(x,t).$
The well-known affine normal flow was investigated by Sapiro and Tannenbaum \cite{ST} and Andrews \cite{BA5,BA4}. Andrews proved that the affine normal flow evolves any convex initial bounded open set, after appropriate rescaling, exponentially fast, to an ellipsoid in the $\mathcal{C}^{\infty}$ topology. In another  direction, interesting results for the affine normal flow have been obtained in \cite{LT} by Loftin and Tsui regarding ancient solutions, and existence and regularity of solutions on non-compact strictly convex hypersurfaces. In \cite{LT}, the classification of compact, ancient solutions has been done in all dimensions except in dimension two: The only compact convex ancient solutions of the affine normal flow in $\mathbb{R}^n$, $n\geq3$, are contracting ellipsoids. We recall that a solution of the affine normal flow is called an ancient solution if it exists on $(-\infty, T)$,
for some finite time $T$. The main result of the paper is the following:
\begin{thmmain}
The only compact convex ancient solutions of the planar affine normal flow are contracting ellipses.
\end{thmmain}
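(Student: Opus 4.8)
The plan is to isolate a scale- and $SL(2)$-invariant monotone quantity that detects ellipses, and then to show that the ancient hypothesis forces this quantity to be constant. Passing to the support function, the flow reads $\partial_t s=-\mathfrak{r}^{-1/3}$, hence $\partial_t\mathfrak{r}=-(\mathfrak{r}^{-1/3})_{\theta\theta}-\mathfrak{r}^{-1/3}$. Writing $A$ for the enclosed area and $\Omega=\int_{\mathbb{S}^1}\mathfrak{r}^{2/3}\,d\theta$ for the affine perimeter, a direct computation gives $A'=-\Omega$ and, after one integration by parts, $\Omega'=\frac23\int_{\mathbb{S}^1}(w_\theta^2-w^2)\,d\theta$ with $w=\mathfrak{r}^{-1/3}$. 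The object I would track is the affine isoperimetric ratio $I=\Omega^3/A$: it is invariant under $SL(2)$ and under scaling, it satisfies the affine isoperimetric inequality $I\le 8\pi^2$ with equality exactly on ellipses, and along the flow it is monotone. Concretely $\frac{d}{dt}\log I=(3\Omega'A+\Omega^2)/(\Omega A)$, and the first step is to prove the sharp inequality $3\Omega'A+\Omega^2\ge 0$, whose equality case is precisely the ellipses; the weight on the $\int w_\theta^2$ term is exactly what keeps equality on non-circular ellipses, so this is genuinely an affine rather than a merely Euclidean statement. This yields $\frac{dI}{dt}\ge 0$, vanishing only on ellipses.

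Next I would bring in Andrews' convergence theorem. Forward in time the rescaled solution converges smoothly to an ellipse, so $I(t)\to 8\pi^2$ as $t\to T^-$. Since $I$ is non-decreasing and bounded above by $8\pi^2$, the backward limit $L=\lim_{t\to-\infty}I(t)$ exists, $L\le 8\pi^2$, and the total monotone variation is finite: $\int_{-\infty}^{T}\frac{dI}{dt}\,dt=8\pi^2-L<\infty$. In particular there is a sequence $t_i\to-\infty$ along which $\frac{dI}{dt}(t_i)\to 0$, that is, the monotonicity deficit vanishes in the limit.

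The heart of the argument, and the step I expect to be the main obstacle, is to make sense of this backward limit. Using the affine invariance I would put each $K_{t_i}$ into John position by an element of $SL(2)$ and rescale to unit area, obtaining bodies $\hat K_i$ with $I(\hat K_i)=I(t_i)\to L$ and vanishing deficit. John's theorem supplies uniform inner and outer radius bounds, so the $\hat K_i$ neither collapse to a segment nor escape to infinity; combined with the smoothing a priori estimates for the affine normal flow this should give smooth subsequential convergence to a convex body $K_\infty$. Passing the deficit to the limit forces $\frac{dI}{dt}=0$ at $K_\infty$, so $K_\infty$ is an ellipse and $L=I(K_\infty)=8\pi^2$. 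The delicate point is precisely the uniform non-degeneracy and regularity as $t_i\to-\infty$ — ruling out that the backward-normalized solution flattens or spreads — and this is where the compactness theory and the estimates for the flow carry the weight.

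Finally, with $L=8\pi^2$ and $I$ non-decreasing one gets $8\pi^2=L\le I(t)\le 8\pi^2$, so $I\equiv 8\pi^2$ for all $t$. By the equality case of the affine isoperimetric inequality, $K_t$ is an ellipse for every $t$. Since an ellipse evolves to an ellipse under the flow and, after normalizing by $SL(2)$ to the round case, the only compact ancient evolution through circles is the homothetically shrinking one, the solution must be a contracting ellipse, which is the assertion of the theorem.
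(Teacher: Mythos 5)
Your architecture (a scale- and $SL(2)$-invariant monotone quantity, Andrews' forward convergence, a backward limit forced to be an ellipse, rigidity from constancy of the monotone quantity) parallels the paper's, which uses $A(K_t)A(K_t^{\ast})$ for the final rigidity and a deficit built from the affine support function for the backward limit. But there is a genuine gap at the step you yourself call the heart of the argument, and it is not only the compactness issue you flag: it is a scaling mismatch. From $\int_{-\infty}^{T}\frac{dI}{dt}\,dt=8\pi^2-L<\infty$ you extract $t_i\to-\infty$ with $\frac{dI}{dt}(t_i)\to 0$ and then assert that passing to the limit forces the deficit to vanish at $K_\infty$. However, $\frac{dI}{dt}=\frac{\Omega^2}{A^2}\bigl(3\Omega'A+\Omega^2\bigr)$ is not scale-invariant: under $K\mapsto\mu K$ it picks up $\mu^{-4/3}$. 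Your limit body is built from the normalized bodies $\hat K_i=\sqrt{\pi/A(t_i)}\,K_{t_i}$, whose static deficit equals $(A(t_i)/\pi)^{2/3}\,\frac{dI}{dt}(t_i)$. Since $A(t_i)\to\infty$ backward in time (a fact you never establish --- the paper proves it by comparison with shrinking disks, Proposition 3.3), the bodies $K_{t_i}$ are enormous and evolve slowly, so $\frac{dI}{dt}(t_i)\to 0$ is essentially automatic and carries no information about $\hat K_i$. The repair is to integrate the deficit against $d\ln A$ rather than $dt$: from $-A'=\Omega_1\le(8\pi^2A)^{1/3}$ one gets $\int Q(\hat K_t)\,\lvert d\ln A\rvert<\infty$ while $\int\lvert d\ln A\rvert=\infty$ because $A\to\infty$, which does force $\liminf_{t\to-\infty}Q(\hat K_t)=0$. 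This reweighting, together with the divergence of the area, is precisely the content of the paper's Proposition 3.3 and Lemma 3.9 and is the missing idea in your sketch.

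Two further points need more than the appeal to ``smoothing a priori estimates.'' First, your deficit involves $\int w_\theta^2$ with $w=\kappa^{1/3}$, i.e.\ derivatives of curvature, so concluding $Q(K_\infty)=0$ from $Q(\hat K_i)\to0$ requires convergence far beyond Hausdorff, with estimates uniform as $t_i\to-\infty$ and compatible with the time-dependent John normalizations; the paper avoids this by choosing the deficit $\int\bigl((\sigma^{-1/4})_{\mathfrak s}\bigr)^2d\mathfrak s$, which by H\"older controls the oscillation of the affine support function, so the conclusion is the $C^0$ statement $\tilde\sigma\to\zeta$, and the limit is identified as an ellipse via weak convergence of Monge--Amp\`ere measures and Petty's lemma --- no curvature convergence is needed (the nondegeneracy coming from the Harnack estimate for ancient solutions, which bounds $\sigma_M/\sigma_m$ uniformly). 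Second, you need the equality case of the pointwise inequality $3\Omega'A+\Omega^2\ge0$ at a single time, which is a sharper rigidity statement than the equality case of the affine isoperimetric inequality and must be justified separately.
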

To prove this theorem, we prove that the backwards limit of the affine support function is constant, hence showing the backwards limit of solution is an ellipse. On the other hand,
Andrews showed that the forwards limit is also an ellipse. Therefore, in view of the monotonicity of the area product $A(K_t^{\ast})A(K_t)$, we conclude that $A(K_t^{\ast})A(K_t)=\pi^2$ for all times.
This in turn implies that ancient solutions of the flow are contracting ellipses.

Similar classification has also been obtained by S. Chen \cite{Chen}. Notice that it was proved by Sapiro, Tannebaum and Andrews that any solution to the flow (\ref{e: flow0}) converges to a point in finite time. In the rest of this paper, we assume, without loss of generality, that the limiting point is the origin of the plane.
\section{Harnack estimate}
We first recall the following Harnack estimate from \cite{BA4}
 \begin{proposition}\label{prop: Harnack estimate}Along the flow (\ref{e: flow0}) we have
$\partial_{t}\left(\mathfrak{r}^{-\frac{1}{3}}t^{\frac{1}{4}}\right)\geq0$
on $\mathbb{S}^1\times (0,T)$.
\end{proposition}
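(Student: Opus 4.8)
The plan is to run the standard Harnack-via-maximum-principle argument on the speed of the flow, working in the Gauss map (support function) parametrization. First I would record that, parametrizing $\partial K_t$ by the normal angle $\theta$, the support function evolves by $\partial_t s = -\kappa^{1/3} = -\mathfrak{r}^{-1/3}$, so that the speed $f := \mathfrak{r}^{-1/3}$ satisfies a clean parabolic equation. Differentiating $\mathfrak{r} = s_{\theta\theta}+s$ in time gives $\partial_t \mathfrak{r} = -(f_{\theta\theta}+f)$, and since $\mathfrak{r}^{-4/3}=f^4$ this yields
\[ \partial_t f = \tfrac13 f^4\left(f_{\theta\theta}+f\right). \]

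Next I would introduce the Harnack quantity $Q := \partial_t\log\bigl(\mathfrak{r}^{-1/3}\bigr) = f_t/f$, and observe that the claimed inequality $\partial_t(\mathfrak{r}^{-1/3}t^{1/4})\ge 0$ is equivalent to $Q \ge -\tfrac{1}{4t}$ on $\mathbb{S}^1\times(0,T)$. To set up a maximum principle I would differentiate the equation for $f$ in time, substitute $f_t = Qf$ and $f_{tt}=(Q_t+Q^2)f$, and simplify. The decisive simplification is that the flow equation itself reads $f^3(f_{\theta\theta}+f)=3Q$, which collapses all the lower order terms into a single reaction term and produces
\[ Q_t = \tfrac13 f^4 Q_{\theta\theta} + \tfrac23 f^3 f_\theta\, Q_\theta + 4Q^2. \]

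The reaction term $4Q^2\ge 0$ is exactly what makes the estimate work. Evaluating at a spatial minimum of $Q$ on the circle, which exists since $\mathbb{S}^1$ is compact and $Q$ is smooth, the diffusion term is nonnegative and the drift term vanishes, so $g(t):=\min_\theta Q(\cdot,t)$ satisfies $g'\ge 4g^2$ in the barrier sense. I would then compare $g$ against $h(t):=-\tfrac1{4t}$, which is an \emph{exact} solution of $y'=4y^2$ and satisfies $h(t)\to-\infty$ as $t\to 0^+$. Because the solution is smooth with strictly positive speed on $[0,t_1]$ for each $t_1<T$, the quantity $Q$ is bounded near $t=0$, so $g(t)>h(t)$ for all small $t$; setting $w=g-h$ gives $w'\ge 4(g+h)w$, a linear differential inequality forcing $w>0$ to persist for all later times. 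Hence $Q\ge -\tfrac1{4t}$, which rearranges to the stated monotonicity.

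The main obstacle I anticipate is the bookkeeping in deriving the evolution equation for $Q$ — in particular recognizing and using the identity $f^3(f_{\theta\theta}+f)=3Q$ to obtain the clean $4Q^2$ reaction term, since a wrong constant here would destroy the exact match with the barrier $-\tfrac1{4t}$. A secondary point requiring care is justifying the comparison at $t=0$: one must invoke smoothness and strict convexity of the initial data, hence boundedness of $Q$ near $t=0$, so that the blow-down of the barrier makes the initial inequality automatic.
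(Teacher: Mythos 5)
Your proof is correct: the evolution equation $f_t=\tfrac13 f^4(f_{\theta\theta}+f)$ for $f=\mathfrak{r}^{-1/3}$, the identity $f^3(f_{\theta\theta}+f)=3Q$, the resulting reaction term $4Q^2$ in the equation for $Q=f_t/f$, and the exact barrier $h(t)=-\tfrac1{4t}$ solving $y'=4y^2$ all check out, and the comparison at $t\to 0^+$ is handled properly. Note that the paper itself gives no proof of this proposition --- it is quoted directly from Andrews \cite{BA4} --- and your argument is precisely the standard Chow--Hamilton-type maximum-principle Harnack estimate that underlies that citation, so you have in effect supplied the omitted proof rather than found a different route.
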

\begin{corollary}\label{cor: harnack estimate all flow}
Every ancient solution of the flow (\ref{e: flow0}) satisfies
$\partial_{t}\left(\frac{\mathfrak{r}^{-\frac{1}{3}}}{s}\right)\geq 0.$
\end{corollary}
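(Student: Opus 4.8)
The plan is to reduce the corollary to the pointwise differential inequality $\partial_t(\mathfrak{r}^{-1/3})\geq 0$, which I would extract from Proposition~\ref{prop: Harnack estimate} by exploiting the time-translation invariance of the flow. Since the evolution equation \eqref{e: flow0} is autonomous, for any $t_0$ the restriction of an ancient solution to $(t_0,T)$ is, after shifting time by $-t_0$, a solution starting at time $0$; hence Proposition~\ref{prop: Harnack estimate} applies to it and gives $\partial_t\left(\mathfrak{r}^{-1/3}(t-t_0)^{1/4}\right)\geq 0$ on $\mathbb{S}^1\times(t_0,T)$. Expanding the derivative and dividing by the positive factor $(t-t_0)^{1/4}$ yields $\partial_t(\mathfrak{r}^{-1/3})+\frac{\mathfrak{r}^{-1/3}}{4(t-t_0)}\geq 0$. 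Fixing $(z,t)$ and letting $t_0\to-\infty$, the second term tends to $0$ because $\mathfrak{r}^{-1/3}(z,t)$ is finite, so one obtains $\partial_t(\mathfrak{r}^{-1/3})\geq 0$ at every point of $\mathbb{S}^1\times(-\infty,T)$.

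With this in hand I would finish by a direct computation. Parametrizing by the Gauss map, the support function evolves under \eqref{e: flow0} by $\partial_t s=-\kappa^{1/3}=-\mathfrak{r}^{-1/3}$. Applying the quotient rule,
\begin{equation*}
\partial_t\left(\frac{\mathfrak{r}^{-1/3}}{s}\right)=\frac{s\,\partial_t(\mathfrak{r}^{-1/3})-\mathfrak{r}^{-1/3}\,\partial_t s}{s^2}=\frac{s\,\partial_t(\mathfrak{r}^{-1/3})+\mathfrak{r}^{-2/3}}{s^2}.
\end{equation*}
Here $s>0$ since the origin lies in the interior of each $K_t$, $\mathfrak{r}^{-2/3}>0$, and $\partial_t(\mathfrak{r}^{-1/3})\geq 0$ by the previous step; thus the numerator is nonnegative and the claimed inequality follows at once.

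The genuinely delicate step is the passage $t_0\to-\infty$: it is precisely the place where the hypothesis of being \emph{ancient} (existence on all of $(-\infty,T)$) is used, to wash out the explicit time weight $t^{1/4}$ present in the Harnack estimate and replace it by the support function. One must ensure that $\mathfrak{r}^{-1/3}(z,t)$ is bounded at the fixed time $t$ so that $\frac{\mathfrak{r}^{-1/3}}{4(t-t_0)}\to 0$; this is guaranteed by the smoothness and strict convexity of the solution at each finite time. By contrast, the concluding computation is routine once the evolution $\partial_t s=-\mathfrak{r}^{-1/3}$ is recorded, and the sign of each term in the numerator is immediate from $s>0$.
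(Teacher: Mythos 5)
Your proposal is correct and follows essentially the same route as the paper: time-translation invariance plus the Harnack estimate to get $\partial_t(\mathfrak{r}^{-1/3})\geq 0$ in the limit $t_0\to-\infty$, combined with the monotonicity of $s$ (the paper simply notes $s$ is decreasing, which is your observation $\partial_t s=-\mathfrak{r}^{-1/3}<0$ made explicit through the quotient rule). No substantive difference.
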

\begin{proof}
By Harnack estimate every solution of the flow (\ref{e: flow0}) satisfies
 \begin{equation*}
 \partial_{t}\mathfrak{r}^{-\frac{1}{3}}+\frac{1}{4t}\mathfrak{r}^{-\frac{1}{3}}\geq0.
 \end{equation*}
If we start the flow from a fixed time $t_0<0$, then this last inequality becomes
\begin{equation*}
 \partial_{t}\mathfrak{r}^{-\frac{1}{3}}+\frac{1}{4(t-t_0)}\mathfrak{r}^{-\frac{1}{3}}\geq0
\end{equation*}
for all $t_0<t<T.$
Letting $t_0$ goes to $-\infty$ yields $ \partial_{t}\mathfrak{r}^{-\frac{1}{3}}\geq0.$
Moreover, $s(\cdot,t)$ is decreasing on the time interval $(-\infty, T)$. The claim follows.
\end{proof}

\section{Affine differential setting}
We will now recall several definitions from affine differential geometry. Let $\gamma:\mathbb{S}^1\to\mathbb{R}^2$ be an embedded strictly convex curve with the curve parameter $\theta$. Define $\mathfrak{g}(\theta):=[\gamma_{\theta},\gamma_{\theta\theta}]^{1/3}$, where for two vectors $u, v$ in $\mathbb{R}^2$, $[u, v]$ denotes the determinant of
the matrix with rows $u$ and $v$. The affine arc-length is defined as
\begin{equation*}
\mathfrak{s}(\theta):=\int_{0}^{\theta}\mathfrak{g}(\alpha)d\alpha.
\end{equation*}
Furthermore, the affine normal vector $\mathfrak{n}$ is given by
$
\mathfrak{n}:=\gamma_{\mathfrak{s}\mathfrak{s}}.
$
In the affine coordinate ${\mathfrak{s}}$, there holds
$[\gamma_{\mathfrak{s}},\gamma_{\mathfrak{s}\mathfrak{s}}]=1,$
and $\sigma=[\gamma,\gamma_{\mathfrak{s}} ].$

Let $K$ be a smooth, strictly convex body having origin of the plane in its interior. We can express the area of $K$, denoted by $A(K)$, in terms of affine invariant quantities:
$$A(K)=\frac{1}{2}\int_{\partial K}\sigma d\mathfrak{s}.$$
The $p$-affine perimeter of $K\in \mathcal{K}_0$ (for $p=1$ the assumption $K\in \mathcal{K}_0$ is not necessary and we can take $K\in \mathcal{K}$), denoted by $\Omega_p(K)$, is defined by
\[\Omega_p(K):=\int_{\partial K}\sigma^{1-\frac{3p}{p+2}}d\mathfrak{s},\]
\cite{Lutwak2}. We call the quantity $\frac{\Omega_p^{2+p}(K)}{A^{2-p}(K)},$ the $p$-affine isoperimetric ratio and mention that it is invariant under $GL(2).$

Let $K\in\mathcal{K}_0$. The polar body associated with $K$, denoted by $K^{\ast}$, is a convex body in $\mathcal{K}_0$ defined by
\[
K^{\ast} = \{ y \in \mathbb{R}^{2} \mid x \cdot y \leq 1,\ \forall x
\in K \}.\]
The area of $K^{\ast}$, denoted by $A^{\ast}=A(K^{\ast})$ can also be
represented in terms of affine invariant quantities:
$$A^{\ast}=\frac{1}{2}\int_{\partial K}\frac{1}{\sigma^2}d\mathfrak{s}=
\frac{1}{2}\int_{\mathbb{S}^1}\frac{1}{s^2}d\theta.$$
Let $K\in \mathcal{K}_0$. We consider a family $\{K_t\}_t\subset \mathcal{K}$ given by the smooth embeddings $X:\partial K\times[0,T)\to \mathbb{R}^2$, which are
 evolving according to (\ref{e: flow0}). Then, up to a time-dependant diffeomorphism, $\{K_t\}_t$ evolves according to
\begin{equation}\label{e: affine def of flow}
 \frac{\partial}{\partial t}X:=\mathfrak{n},~~
 X(\cdot,0)=X_{K}(\cdot).
\end{equation}
Therefore, classification of compact convex ancient solutions of (\ref{e: flow0}) is equivalent to the classification of compact convex ancient solutions of
(\ref{e: affine def of flow}). In what follows, our reference flow will be always the evolution equation (\ref{e: affine def of flow}).
\begin{proposition}\label{cor: harnack estimate all flow1}
Every ancient solution to the flow (\ref{e: affine def of flow}) satisfies
$\left(\partial_{t}\sigma\right)\leq 0.$
\end{proposition}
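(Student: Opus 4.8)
The plan is to deduce the claim directly from the Harnack-type monotonicity of Corollary~\ref{cor: harnack estimate all flow}, exploiting the factorization of the affine support function. The decisive algebraic identity is $\sigma = s\,\mathfrak{r}^{1/3}$, which gives $1/\sigma = \mathfrak{r}^{-1/3}/s$; that is, the reciprocal of $\sigma$ is exactly the quantity whose time-monotonicity was established for ancient solutions in Corollary~\ref{cor: harnack estimate all flow}.

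First I would fix the parametrization. The functions $s$, $\mathfrak{r}$, and hence $\sigma$ are naturally regarded as functions of the normal angle $\theta\in\mathbb{S}^1$ and of time, and they depend only on the evolving body $K_t$, not on the particular parametrization of the flow. Since the flows (\ref{e: flow0}) and (\ref{e: affine def of flow}) trace out the same family $\{K_t\}$ and differ only by a time-dependent tangential diffeomorphism, the monotonicity $\partial_t(\mathfrak{r}^{-1/3}/s)\geq 0$ of Corollary~\ref{cor: harnack estimate all flow}, read at fixed $\theta$, transfers verbatim to $\partial_t(1/\sigma)\geq 0$ for solutions of (\ref{e: affine def of flow}).

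I would then conclude by differentiating the identity $\sigma\cdot(1/\sigma)=1$. Because the origin lies in the interior of each $K_t$ we have $s>0$ and $\mathfrak{r}>0$, so $\sigma>0$ on $\mathbb{S}^1\times(-\infty,T)$; hence $\partial_t\sigma = -\sigma^2\,\partial_t(1/\sigma)\leq 0$, as claimed. As a consistency check this matches the contraction of the bodies: both $s$ and $\mathfrak{r}^{1/3}$ are nonincreasing, so their product $\sigma$ should indeed decrease.

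The only point requiring genuine care, and the step I expect to be the main obstacle, is the bookkeeping of \emph{which} time derivative is intended: the fixed-$\theta$ (support-function) derivative underlying Corollary~\ref{cor: harnack estimate all flow} versus the material derivative along (\ref{e: affine def of flow}), which carries an additional tangential component of the affine normal. I would neutralize this by working throughout in the support-function parametrization, where $\sigma$ is intrinsically a function of $(\theta,t)$ determined by $K_t$ alone, so that the tangential reparametrization relating the two flows never enters. Should one instead insist on computing the material derivative directly, the alternative would be to derive the parabolic evolution equation for $\sigma$ from $\partial_t X=\mathfrak{n}$ using the affine structure relations (in particular $[\gamma_{\mathfrak{s}},\gamma_{\mathfrak{s}\mathfrak{s}}]=1$) together with the evolution of the affine arc-length element $d\mathfrak{s}$; this is decidedly more laborious, and the reciprocal argument via the Harnack estimate is cleaner.
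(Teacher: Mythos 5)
Your core reduction coincides with the paper's: for an ancient solution the Harnack estimate gives $\partial_t\mathfrak{r}^{-1/3}\geq 0$ at fixed normal angle $\theta$, and together with the monotonicity of $s$ this makes $1/\sigma=\mathfrak{r}^{-1/3}/s$ nondecreasing, hence $\sigma$ nonincreasing, at fixed $\theta$. That is exactly Corollary~\ref{cor: harnack estimate all flow} read through the identity $\sigma=s\,\mathfrak{r}^{1/3}$, and this part of your argument is correct.

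The gap sits precisely at the step you flag as ``the main obstacle.'' Proposition~\ref{cor: harnack estimate all flow1} concerns the flow (\ref{e: affine def of flow}), where $\sigma$ is a function on $\partial K\times(-\infty,T)$ and $\partial_t\sigma$ is the derivative at fixed material point $x$; this is the derivative computed in Lemma~\ref{e: basic ev}(1), namely $\partial_t\sigma=-\frac{4}{3}+\frac{1}{3}\sigma_{\mathfrak{s}\mathfrak{s}}$, and it is in that form that the proposition is later combined with the evolution equation to produce (\ref{e: fundamental}). Declaring that you will ``work throughout in the support-function parametrization'' does not close this: it changes the statement rather than proving it, since a priori $\partial_t\bigl(\bar{\sigma}(\nu(x,t),t)\bigr)=\partial_t\bar{\sigma}+\langle\nabla_\theta\bar{\sigma},\partial_t\nu\rangle$ could carry an extra term coming from the rotation of the Gauss map. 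The missing ingredient --- which is essentially the entire content of the paper's short proof --- is the identity $\partial_t\nu=0$ for the affine-normal parametrization (\ref{e: affine def of flow}) (Loftin--Tsui, p.~123): the tangential part of $\mathfrak{n}$ exactly freezes the Gauss map, so the fixed-$x$ and fixed-$\theta$ time derivatives coincide and your fixed-$\theta$ monotonicity transfers to the material derivative. With that one citation inserted your argument is complete; without it, the inequality you prove is not the one that is fed into (\ref{e: fundamental}).
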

\begin{proof}
Assume $Q$ and $\bar{Q}$ are two smooth functions $Q:\partial K\times [0,T)\to \mathbb{R}$, $\bar{Q}:\mathbb{S}^1\times[0,T)\to \mathbb{R}$ that are related by
$Q(x,t)=\bar{Q}(\nu(x,t),t)$.  To prove the claim, we observe that the identity $\frac{\partial}{\partial t} \nu=0$ (see \cite{LT} page 123) implies
$\partial_{t}\bar{Q}=\partial_{t}Q.$ Therefore, the claim follows from Proposition \ref{prop: Harnack estimate}.
\end{proof}
\begin{proposition}\label{prop: area backward limit}
Every ancient solution to the flow (\ref{e: affine def of flow}) satisfies
$$\lim\limits_{t\to-\infty}A(K_t)=\infty.$$
\end{proposition}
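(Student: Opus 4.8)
The plan is to exploit the avoidance principle for the flow together with the fact that ellipses are self-similarly shrinking solutions. First observe that since the speed $\mathfrak{n}$ points inward, $A(K_t)$ is non-increasing in $t$; hence $\lim_{t\to-\infty}A(K_t)$ exists in $(0,\infty]$ and the real task is to exclude a finite limit. To set up a comparison I would first calibrate the family of shrinking ellipses. Because the flow is invariant under $SL(2)$ and translations, every ellipse evolves through concentric homothetic ellipses contracting to its center, and by reduction to the round case (a disk of radius $R$ contracts according to $\dot R=-R^{-1/3}$) an ellipse of area $a$ starting at time $t$ becomes extinct exactly at time $t+\tfrac34(a/\pi)^{2/3}$.

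Now fix $t<T$ and let $E_t\supseteq K_t$ be the minimal-area (L\"owner) ellipse circumscribing $K_t$. Flowing $E_t$ forward produces a family of shrinking ellipses which, by the avoidance principle for \eqref{e: affine def of flow}, continues to contain $K_s$ for every $s\in[t,T)$. Since $K_s$ has positive area for all $s<T$, the circumscribing ellipse cannot become extinct before time $T$; comparing with the extinction time computed above gives $\tfrac34\big(A(E_t)/\pi\big)^{2/3}\ge T-t$, that is $A(E_t)\ge \pi\big(\tfrac43(T-t)\big)^{3/2}$. On the other hand, John's theorem bounds the L\"owner ellipse in terms of the body it circumscribes, $A(E_t)\le 4\,A(K_t)$. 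Combining the two estimates,
\[
A(K_t)\ \ge\ \tfrac14\,A(E_t)\ \ge\ \tfrac{\pi}{4}\Big(\tfrac43\,(T-t)\Big)^{3/2},
\]
and letting $t\to-\infty$ yields the claim.

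The auxiliary ingredients here are standard: the self-similar ellipse solutions and their extinction times follow from the $SL(2)$-invariance of the flow, and the bound $A(E_t)\le 4\,A(K_t)$ is just John's theorem in the plane. The step that demands genuine care is the comparison itself: one must invoke the containment (avoidance) principle for the fully nonlinear, degenerate-parabolic flow \eqref{e: affine def of flow} and, crucially, argue that the inclusion $K_s\subseteq E_s$ persists all the way up to the extinction time $T$ of $K_t$, so that the surviving circumscribing ellipse really forces its initial area to grow like $(T-t)^{3/2}$. This is where I expect the only substantive subtlety to lie.
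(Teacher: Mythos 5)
Your proposal is correct and follows essentially the same route as the paper: the paper also encloses $K_t$ in an ellipse of area comparable to $A(K_t)$ via John's lemma (normalizing by an $SL(2)$ map so the outer ellipse is a disk $B_{2R}$), invokes the comparison principle, and uses the exact extinction time $\tfrac34(2R)^{4/3}$ of the disk to force $A(K_t)\gtrsim (T-t)^{3/2}$. The only cosmetic difference is that you work with the L\"owner ellipse directly and compute its extinction time by $SL(2)$-invariance, rather than first reducing to concentric disks.
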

\begin{proof}
For a fixed $N>0$, in view of John's ellipsoid lemma, we may apply a special linear transformation $\Phi$ to $K_{-N}$, such that $B_R\subseteq\Phi K_{-N}\subseteq B_{2R}.$ Here, $B_{2R}$ and $B_R$ are disks of radii $R$ and $2R$ with the same centers. On the one hand, by comparing area, $B_R\subseteq\Phi K_{-N}\subseteq B_{2R}$  implies that $R\leq \sqrt{ \frac{A(\Phi K_{-N})}{\pi}}=\sqrt{ \frac{A(K_{-N})}{\pi}}.$ On the other hand, it is easy to see that it takes $B_{2R}$ exactly the time $\frac{3}{4}(2R)^{\frac{4}{3}}$ to shrink to its center under the affine normal flow. Since the solution $\{K_t\}$ exists on $[-N,T)$, by the comparison principle we must have
$$T+N\leq \frac{3}{4}(2R)^{\frac{4}{3}}\leq \frac{3}{4}\left(2\sqrt{ \frac{A(K_{-N})}{\pi}}\right)^{\frac{4}{3}}.$$
Therefore, $\lim\limits_{N\to+\infty}A(K_{-N})=\infty.$
\end{proof}
Next are two lemmas whose proofs were given in \cite{Ivaki}.
\begin{lemma}(Lemma 3.1 of \cite{Ivaki})\label{e: basic ev}
Let $\gamma_t:=\partial K_t$ be the boundary of the convex body $K_t$ which is evolving by the flow (\ref{e: affine def of flow}). Then the following evolution equations hold:
\begin{enumerate}
\item $\displaystyle \frac{\partial}{\partial t}\sigma=-\frac{4}{3}
+\frac{1}{3}\sigma_{\mathfrak{s}\mathfrak{s}},$
\item $\displaystyle \frac{d}{dt}A=-\Omega_1.$
\end{enumerate}
\end{lemma}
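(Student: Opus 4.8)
The plan is to carry out the computation entirely in the affine frame $\{\gamma_{\mathfrak{s}},\gamma_{\mathfrak{s}\mathfrak{s}}\}$ adapted to the evolving curve, while keeping careful track of the fact that the affine arc-length $\mathfrak{s}$ is itself time-dependent, so that $\partial_t$ and $\partial_{\mathfrak{s}}$ do not commute. The first ingredient I would record is the affine Frenet relation: differentiating the normalization $[\gamma_{\mathfrak{s}},\gamma_{\mathfrak{s}\mathfrak{s}}]=1$ in $\mathfrak{s}$ gives $[\gamma_{\mathfrak{s}},\gamma_{\mathfrak{s}\mathfrak{s}\mathfrak{s}}]=0$, whence $\gamma_{\mathfrak{s}\mathfrak{s}\mathfrak{s}}=-\mu\,\gamma_{\mathfrak{s}}$ for a scalar $\mu$ (the affine curvature), i.e.\ $\mathfrak{n}_{\mathfrak{s}}=-\mu\,\gamma_{\mathfrak{s}}$. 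Everything else will reduce to bookkeeping with $2\times2$ determinants once the time-dependence of the parametrization is pinned down.

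The main obstacle, and the step I would attend to most carefully, is controlling the evolution of the affine metric factor $\mathfrak{g}$, defined by $d\mathfrak{s}=\mathfrak{g}\,d\theta$. Working at a fixed background parameter $\theta$, I would use $\partial_t\gamma_\theta=\partial_\theta\mathfrak{n}=\mathfrak{g}\,\mathfrak{n}_{\mathfrak{s}}=-\mu\,\gamma_\theta$ together with the Frenet relation to compute $\partial_t(\mathfrak{g}^3)=\partial_t[\gamma_\theta,\gamma_{\theta\theta}]=-2\mu\,\mathfrak{g}^3$, hence $\partial_t\log\mathfrak{g}=-\tfrac23\mu$. This single identity supplies the two facts needed downstream: the commutator $[\partial_t,\partial_{\mathfrak{s}}]=\tfrac23\mu\,\partial_{\mathfrak{s}}$ and the element evolution $\partial_t(d\mathfrak{s})=-\tfrac23\mu\,d\mathfrak{s}$.

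For part (1) I would first compute $\partial_t\gamma_{\mathfrak{s}}=\partial_{\mathfrak{s}}\mathfrak{n}+\tfrac23\mu\,\gamma_{\mathfrak{s}}=\bigl(-\mu+\tfrac23\mu\bigr)\gamma_{\mathfrak{s}}=-\tfrac13\mu\,\gamma_{\mathfrak{s}}$, and then differentiate $\sigma=[\gamma,\gamma_{\mathfrak{s}}]$ to get $\partial_t\sigma=[\mathfrak{n},\gamma_{\mathfrak{s}}]+[\gamma,-\tfrac13\mu\,\gamma_{\mathfrak{s}}]=-1-\tfrac13\mu\sigma$. A parallel computation of the spatial derivatives yields $\sigma_{\mathfrak{s}\mathfrak{s}}=[\gamma_{\mathfrak{s}},\gamma_{\mathfrak{s}\mathfrak{s}}]+[\gamma,\gamma_{\mathfrak{s}\mathfrak{s}\mathfrak{s}}]=1-\mu\sigma$, so that $\mu\sigma=1-\sigma_{\mathfrak{s}\mathfrak{s}}$; substituting gives $\partial_t\sigma=-\tfrac43+\tfrac13\sigma_{\mathfrak{s}\mathfrak{s}}$, which is exactly (1).

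For part (2) I would differentiate $A=\tfrac12\int_{\partial K}\sigma\,d\mathfrak{s}$, combining $\partial_t\sigma$ with $\partial_t(d\mathfrak{s})=-\tfrac23\mu\,d\mathfrak{s}$ to obtain the integrand $\partial_t\sigma-\tfrac23\mu\sigma=-1-\mu\sigma=-2+\sigma_{\mathfrak{s}\mathfrak{s}}$. Since $\int_{\partial K}\sigma_{\mathfrak{s}\mathfrak{s}}\,d\mathfrak{s}=0$ on the closed curve, only the constant term survives, giving $\tfrac{d}{dt}A=-\int_{\partial K}d\mathfrak{s}$. Finally I would observe that the exponent $1-\tfrac{3p}{p+2}$ vanishes at $p=1$, so $\Omega_1=\int_{\partial K}d\mathfrak{s}$, which identifies the right-hand side as $-\Omega_1$ and completes the proof. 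Apart from the metric-evolution step, the argument is purely routine determinant algebra.
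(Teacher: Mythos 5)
Your proof is correct. The paper itself offers no argument for this lemma --- it is quoted verbatim from Lemma 3.1 of \cite{Ivaki} --- so there is nothing in this document to compare against; your derivation is the standard self-contained one in the affine frame. The key identities all check out: $\partial_t\log\mathfrak{g}=-\tfrac23\mu$ (hence the commutator $[\partial_t,\partial_{\mathfrak{s}}]=\tfrac23\mu\,\partial_{\mathfrak{s}}$ and $\partial_t(d\mathfrak{s})=-\tfrac23\mu\,d\mathfrak{s}$), $\partial_t\sigma=-1-\tfrac13\mu\sigma$, and $\sigma_{\mathfrak{s}\mathfrak{s}}=1-\mu\sigma$, which combine to give (1); and the integrand $-2+\sigma_{\mathfrak{s}\mathfrak{s}}$ together with $\Omega_1=\int_{\gamma_t}d\mathfrak{s}$ gives (2). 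Both formulas are also consistent with the test case of a shrinking circle, where $\sigma=r^{4/3}$ and $\partial_t\sigma=-\tfrac43$.
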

\begin{lemma}(Lemma 6.1 of \cite{Ivaki})\label{lem: controlling derivative of $l$-affine surface area along affine normal flow}
For every $l\geq2$  the $l$-affine perimeter satisfies:
\begin{equation}\label{e: eveq general}
\frac{d}{dt}\Omega_l(t)=\frac{2(l-2)}{l+2}\int_{\gamma_t}\sigma^{-\frac{3l}{l+2}}d\mathfrak{s}
+\frac{6l}{(l+2)^2}\int_{\gamma_t}\sigma^{-1-\frac{3l}{l+2}}(\sigma_{\mathfrak{s}})^2d\mathfrak{s}.
\end{equation}
\end{lemma}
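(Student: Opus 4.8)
The plan is to compute $\frac{d}{dt}\Omega_l$ directly from the definition $\Omega_l=\int_{\gamma_t}\sigma^{a}\,d\mathfrak{s}$ with $a:=1-\frac{3l}{l+2}$, by differentiating under the integral sign. Since $\Omega_l$ depends only on the image curve $\gamma_t$ and not on its parametrization, I would evaluate the rate of change along the Euclidean form (\ref{e: flow0}) of the flow, where the identity $\partial_t\nu=0$ keeps the outward normal angle $\theta$ fixed in time. This realizes each $\gamma_t$ as a radial graph over the fixed circle $\mathbb{S}^1$ and lets me differentiate under the integral with no contribution from a moving domain. The two ingredients needed are the evolution of the integrand $\sigma^a$, supplied by Lemma \ref{e: basic ev}(1), and the evolution of the affine arc-length measure $d\mathfrak{s}$.

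For the latter I would use $d\mathfrak{s}=\mathfrak{r}^{2/3}\,d\theta=\frac{\sigma^2}{s^2}\,d\theta$ together with $\partial_t s=-\mathfrak{r}^{-1/3}=-s/\sigma$; substituting $\partial_t\sigma=-\tfrac43+\tfrac13\sigma_{\mathfrak{s}\mathfrak{s}}$ then yields
\[
\partial_t(d\mathfrak{s})=\frac{2}{3\sigma}\bigl(\sigma_{\mathfrak{s}\mathfrak{s}}-1\bigr)\,d\mathfrak{s}.
\]
Feeding this and $\partial_t\sigma$ into $\frac{d}{dt}\Omega_l=\int\bigl(a\sigma^{a-1}\partial_t\sigma\,d\mathfrak{s}+\sigma^a\,\partial_t(d\mathfrak{s})\bigr)$ collects the integrand into a zeroth-order piece with coefficient $-\tfrac{4a+2}{3}$ multiplying $\sigma^{a-1}$, and a second-order piece with coefficient $\tfrac{a+2}{3}$ multiplying $\sigma^{a-1}\sigma_{\mathfrak{s}\mathfrak{s}}$.

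The decisive step is a single integration by parts in the affine arc-length variable on the closed curve $\gamma_t$: since the boundary term vanishes, $\int\sigma^{a-1}\sigma_{\mathfrak{s}\mathfrak{s}}\,d\mathfrak{s}=-(a-1)\int\sigma^{a-2}(\sigma_{\mathfrak{s}})^2\,d\mathfrak{s}$, which turns the second-order term into the squared-gradient term in (\ref{e: eveq general}). It then remains only to simplify the numerical coefficients. Using $a-1=-\tfrac{3l}{l+2}$, $a-2=-1-\tfrac{3l}{l+2}$ and $a+2=\tfrac{6}{l+2}$, the zeroth-order coefficient becomes $-\tfrac{4a+2}{3}=\tfrac{2(l-2)}{l+2}$ and the gradient coefficient becomes $-\tfrac{(a+2)(a-1)}{3}=\tfrac{6l}{(l+2)^2}$, matching the two exponents and both prefactors in the asserted identity.

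I expect the main obstacle to be pinning down the evolution of the affine measure $\partial_t(d\mathfrak{s})$ in clean invariant form. The difficulty is that $\mathfrak{s}$ is itself time-dependent, so $\partial_t$ and $\partial_{\mathfrak{s}}$ do not commute and one cannot naively push $t$-derivatives through $\mathfrak{s}$-derivatives. Working throughout in the time-independent angular parameter $\theta$ circumvents this: every $t$-derivative is taken at fixed $\theta$, the one integration by parts is carried out at a fixed time, and the results are re-expressed through $d\mathfrak{s}$ and $\sigma_{\mathfrak{s}}$ only at the very end. One should also confirm that the boundary term in the integration by parts genuinely vanishes, which it does since $\gamma_t$ is a smooth closed convex curve.
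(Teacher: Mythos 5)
Your derivation is correct, and it is essentially the argument behind the cited Lemma 6.1 of \cite{Ivaki} (the present paper only quotes that result and gives no proof): differentiate $\Omega_l=\int\sigma^{a}d\mathfrak{s}$ with $a=1-\frac{3l}{l+2}$ using $\partial_t\sigma=-\frac43+\frac13\sigma_{\mathfrak{s}\mathfrak{s}}$ together with the evolution of the affine measure, then integrate by parts once on the closed curve. I checked the key identity $\partial_t(d\mathfrak{s})=\frac{2}{3\sigma}(\sigma_{\mathfrak{s}\mathfrak{s}}-1)\,d\mathfrak{s}$ (equivalently $-\frac23\mu\,d\mathfrak{s}$, and note that your fixed-$\theta$ parametrization is in fact exactly the Lagrangian parametrization of (\ref{e: affine def of flow}), since $\partial_t\nu=0$), as well as the coefficient bookkeeping $-\frac{4a+2}{3}=\frac{2(l-2)}{l+2}$ and $-\frac{(a+2)(a-1)}{3}=\frac{6l}{(l+2)^2}$; everything matches (\ref{e: eveq general}).
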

\begin{remark}\label{rem: rem}
We recall from \cite{BA5} that $\frac{d}{dt}\frac{\Omega_1^3(K_t)}{A(K_t)}\geq 0$ and $\frac{d}{dt}A(K_t)A(K_t^{\ast})\geq0$. Moreover, equality holds in the latter case if and only if $K_t$ is an origin-centered ellipse.
\end{remark}
Write $\sigma_M(t)$ and $\sigma_m(t)$ for $\max\limits_{\gamma_t}\sigma$ and $\min\limits_{\gamma_t}\sigma$, respectively. In the next lemma, we will prove that the ratio
$\frac{\sigma_{M}(t)}{\sigma_m(t)}$ remains uniformly bounded from above.
\begin{lemma}
There is a constant $0<c<\infty$, such that $\frac{\sigma_{M}(t)}{\sigma_m(t)}\leq c$ for all $t\in(-\infty,0].$
\end{lemma}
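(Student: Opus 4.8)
The plan is to convert the Harnack-type monotonicity into a one-sided bound on the affine curvature of $\gamma_t$ and then to control the oscillation of $\sigma$ by the (uniformly bounded) area product $A(K_t)A(K_t^{\ast})$. First I would combine Proposition~\ref{cor: harnack estimate all flow1} with the evolution equation of Lemma~\ref{e: basic ev}: since
\[
\partial_t\sigma=-\tfrac43+\tfrac13\sigma_{\mathfrak{s}\mathfrak{s}}\le 0
\]
along any ancient solution, we get the pointwise spatial estimate $\sigma_{\mathfrak{s}\mathfrak{s}}\le 4$ on $\gamma_t$ for every $t\le 0$. Fixing $t$ and writing $\mathfrak{s}$ for affine arc-length, let $\mathfrak{s}_m$ be a point where $\sigma$ attains its minimum $\sigma_m$, so that $\sigma_{\mathfrak{s}}(\mathfrak{s}_m)=0$. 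Integrating $\sigma_{\mathfrak{s}\mathfrak{s}}\le 4$ twice outwards from $\mathfrak{s}_m$ in both directions along $\gamma_t$ (of total affine length $\Omega_1(t)$) yields $\sigma(\mathfrak{s})\le\sigma_m+2\,d(\mathfrak{s})^2$, where $d(\mathfrak{s})\in[0,\Omega_1/2]$ is the affine-arc-length distance from $\mathfrak{s}$ to $\mathfrak{s}_m$. In particular $\sigma_M-\sigma_m\le\tfrac12\Omega_1^2$, so it suffices to bound the scale-invariant quantity $\Omega_1^2/\sigma_m$.

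To bound $\Omega_1^2/\sigma_m$ I would play the trivial lower bound $\sigma\ge\sigma_m$ against the upper bound just obtained, using the two integral representations $A=\tfrac12\int_{\gamma_t}\sigma\,d\mathfrak{s}$ and $A^{\ast}=\tfrac12\int_{\gamma_t}\sigma^{-2}\,d\mathfrak{s}$. From $\sigma\ge\sigma_m$ we get $A\ge\tfrac12\sigma_m\Omega_1$, while from $\sigma\le\sigma_m+2d^2$, together with the fact that the points at distance in $[u,u+du]$ from $\mathfrak{s}_m$ fill measure $2\,du$, we get
\[
A^{\ast}\ge\int_0^{\Omega_1/2}\bigl(\sigma_m+2u^2\bigr)^{-2}\,du .
\]
Normalising $\sigma_m=1$ by the scaling $K\mapsto\lambda K$ (under which $\sigma\mapsto\lambda^{4/3}\sigma$, $\Omega_1\mapsto\lambda^{2/3}\Omega_1$, and $A\,A^{\ast}$ is invariant), this integral is bounded below by a fixed positive constant $c_0=\int_0^1(1+2u^2)^{-2}\,du$ as soon as $\Omega_1\ge 2$.

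Now I would invoke the uniform ceiling $A(K_t)A(K_t^{\ast})\le\pi^2$. This is not Santal\'o's inequality applied blindly, but a consequence of Remark~\ref{rem: rem}: $A\,A^{\ast}$ is non-decreasing in $t$, and Andrews' theorem forces the forwards limit of $\{K_t\}$ to be an ellipse centred at the limit point, which we have taken to be the origin; for an origin-centred ellipse $A\,A^{\ast}=\pi^2$, so monotonicity gives $A\,A^{\ast}\le\pi^2$ at every earlier time. Combining, with $\sigma_m=1$: either $\Omega_1\le 2$, or $\pi^2\ge A\,A^{\ast}\ge\tfrac12\Omega_1\,c_0$, so $\Omega_1\le 2\pi^2/c_0$ in all cases. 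Undoing the scaling gives $\Omega_1^2\le C\sigma_m$ for a universal constant $C$, whence $\sigma_M/\sigma_m\le 1+\tfrac{\Omega_1^2}{2\sigma_m}\le 1+\tfrac{C}{2}=:c$, uniformly in $t\le 0$.

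The one genuinely delicate point is the passage from the Harnack inequality to a usable geometric estimate: the flow supplies only the \emph{one-sided} bound $\sigma_{\mathfrak{s}\mathfrak{s}}\le 4$, so $\sigma$ cannot be controlled from below pointwise. The key realisation is that this one-sided bound is exactly what is needed, since it bounds $\sigma$ from above, hence the oscillation $\sigma_M-\sigma_m$ from above and $A^{\ast}$ from below, while the remaining freedom is removed by the scale- and $SL(2)$-invariant ceiling $A\,A^{\ast}\le\pi^2$. Verifying that this ceiling really holds for \emph{all} $t$, rather than only asymptotically, via the monotonicity of Remark~\ref{rem: rem} is the part that must be argued with care.
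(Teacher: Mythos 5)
Your argument is correct, but it takes a genuinely different route from the paper's. Both proofs start from the same combination of Proposition \ref{cor: harnack estimate all flow1} with Lemma \ref{e: basic ev}(1), namely $0\geq\partial_t\sigma=-\frac43+\frac13\sigma_{\mathfrak{s}\mathfrak{s}}$; the divergence is in how this is exploited. The paper divides by $\sigma$, integrates over $\gamma_t$, and integrates by parts to obtain $\int_{\gamma_t}((\ln\sigma)_{\mathfrak{s}})^2\,d\mathfrak{s}\leq 4\int_{\gamma_t}\sigma^{-1}\,d\mathfrak{s}$; two applications of H\"older then bound the total variation of $\ln\sigma$ by $\sqrt{32}\,A^{\ast\,1/2}\Omega_1^{3/2}$, which is controlled on $(-\infty,0]$ by the monotonicity of $AA^{\ast}$ and of $\Omega_1^3/A$ from Remark \ref{rem: rem}. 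You instead read off the pointwise bound $\sigma_{\mathfrak{s}\mathfrak{s}}\leq 4$, integrate it twice from the minimum point to get the quadratic barrier $\sigma\leq\sigma_m+2d^2$ and hence $\sigma_M-\sigma_m\leq\frac12\Omega_1^2$, and then eliminate $\Omega_1^2/\sigma_m$ by playing $A\geq\frac12\sigma_m\Omega_1$ and the barrier-induced lower bound on $A^{\ast}$ against the ceiling $AA^{\ast}\leq\pi^2$. The steps all check out: $\sigma_{\mathfrak{s}\mathfrak{s}}$ has homogeneity degree $0$ under $K\mapsto\lambda K$, so the normalization $\sigma_m=1$ preserves the barrier; the change of variables giving $A^{\ast}\geq\int_0^{\Omega_1/2}(\sigma_m+2u^2)^{-2}\,du$ is exact; and the ceiling $AA^{\ast}\leq\pi^2$ does follow, as you argue, from the monotonicity in Remark \ref{rem: rem} together with Andrews' forwards limit $\lim_{t\to T}A(\tilde K_t)A(\tilde K_t^{\ast})=\pi^2$ (a fact the paper itself invokes, though only later in the proof of the main theorem). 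What your route buys is a \emph{universal} constant $c$, independent of the particular ancient solution, whereas the paper's constant depends on $A^{\ast}(0)$ and $\Omega_1(0)$; what it costs is the reliance on the forwards asymptotics, which you could avoid entirely by replacing $AA^{\ast}\leq\pi^2$ with the weaker $A(K_t)A(K_t^{\ast})\leq A(K_0)A(K_0^{\ast})$ for $t\leq 0$, recovering a solution-dependent constant exactly as in the paper.
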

\begin{proof}
Combining Proposition \ref{cor: harnack estimate all flow1} and part (1) of Lemma \ref{e: basic ev} yield
 \begin{align}\label{e: fundamental}
0&\geq\frac{\partial_t\sigma}{\sigma}=-\frac{4}{3\sigma}
+\frac{1}{3}\frac{\sigma_{\mathfrak{s}\mathfrak{s}}}{\sigma}.
 \end{align}
Integrating (\ref{e: fundamental}) against $d\mathfrak{s}$ gives
$
4\int_{\gamma_t}\frac{1}{\sigma}d\mathfrak{s}\geq
\int_{\gamma_t}
((\ln \sigma)_{\mathfrak{s}})^2d\mathfrak{s}.
$
By applying the H\"{o}lder inequality to the left-hand side and right-hand side of the inequality we obtain
\begin{align*}
\left(\int|\left(\ln\sigma\right)_{\mathfrak{s}}|d\mathfrak{s}\right)^2\leq\Omega_1\int_{\gamma_t}
((\ln \sigma)_{\mathfrak{s}})^2d\mathfrak{s}\le 4\Omega_1\int_{\gamma_t}\frac{1}{\sigma}d\mathfrak{s}&\leq\sqrt{32}A^{\ast~\frac{1}{2}}\Omega_1^{\frac{3}{2}}\\
&=\sqrt{32}\frac{A^{\frac{1}{2}}A^{\ast ~\frac{1}{2}}\Omega_1^{\frac{3}{2}}}{A^{\frac{1}{2}}}.
\end{align*}
Here, we used the identities $\int_{\gamma_t}\frac{1}{\sigma^2}d\mathfrak{s}=2A^{\ast}$
and $\int_{\gamma_t} d\mathfrak{s}=\Omega_1.$
Consequently, by Remark \ref{rem: rem} we get
$
\left(\ln\frac{\sigma_{M}(t)}{\sigma_m(t)}\right)^2\leq \sqrt{32}\frac{A^{\frac{1}{2}}(0)A^{\ast ~\frac{1}{2}}(0)\Omega_1^{\frac{3}{2}}(0)}{A^{\frac{1}{2}}(0)}.
$
This implies that
\begin{equation}\label{ie: sigma ratio}
\frac{\sigma_{M}}{\sigma_m}\leq c,
\end{equation}
on the time interval $(-\infty,0],$ for some constant $0<c<\infty$.
\end{proof}
Let $\{K_t\}_t$ be a solution of the flow (\ref{e: affine def of flow}). Then the family of convex bodies, $\{\tilde{K}_t\}_t$, defined by
$\tilde{K}_t:=\sqrt{\frac{\pi}{A(K_t)}}K_t$ is called a normalized solution to the affine normal flow, equivalently a solution that keeps the area of evolving bodies fixed and equal to $\pi.$

We furnish every quantity associated with the normalized solution with an over-tilde. For example, the support function, the curvature, and the affine support function of $\tilde{K}$ are denoted, in order, by $\tilde{s}$, $\tilde{\kappa}$, and $\tilde{\sigma}$.
\begin{lemma} There is a constant $0<c<\infty$ such that on the time interval $(-\infty,T)$
\begin{equation}\label{ie: uniform para1}
\frac{\tilde{\sigma}_{M}(t)}{\tilde{\sigma}_m(t)}\leq c.
\end{equation}
\end{lemma}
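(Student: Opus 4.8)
The plan is to reduce the claim to the bound \eqref{ie: sigma ratio} already obtained for the unnormalized flow, by exploiting the fact that the ratio in question is \emph{unchanged} under the normalization. First I would record how the affine support function behaves under a homothety. Since both the support function $s$ and the radius of curvature $\mathfrak{r}$ scale linearly, the product $\sigma = s\,\mathfrak{r}^{1/3}$ scales by the $\tfrac{4}{3}$ power: under $K\mapsto\lambda K$ one has $\sigma\mapsto\lambda^{4/3}\sigma$. As $\tilde{K}_t=\sqrt{\pi/A(K_t)}\,K_t$ differs from $K_t$ by a \emph{spatially constant} factor $\lambda(t)=\sqrt{\pi/A(K_t)}$, it follows that $\tilde{\sigma}(\cdot,t)=\lambda(t)^{4/3}\sigma(\cdot,t)$, and therefore
\[
\frac{\tilde{\sigma}_M(t)}{\tilde{\sigma}_m(t)}=\frac{\sigma_M(t)}{\sigma_m(t)}
\]
at every time. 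Equivalently, the two factors $A(K_t)A(K_t^\ast)$ and $\Omega_1^3/A$ appearing on the right-hand side of the estimate in the preceding lemma are $GL(2)$-invariant, hence unaffected by the normalization.

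Next I would revisit the chain of inequalities leading to \eqref{ie: sigma ratio}. That chain is obtained at each fixed time from \eqref{e: fundamental}, the H\"older inequality, and the identities for $A^\ast$ and $\Omega_1$; none of these steps uses the sign of $t$. Thus it in fact yields
\[
\left(\ln\frac{\sigma_M(t)}{\sigma_m(t)}\right)^2\le\sqrt{32}\,(A(K_t)A(K_t^\ast))^{1/2}\left(\frac{\Omega_1^3(K_t)}{A(K_t)}\right)^{1/2}
\]
for \emph{every} $t\in(-\infty,T)$, and not merely for $t\le0$. The only place where the restriction $t\le0$ entered was in bounding the right-hand side by its value at $t=0$ through the monotonicity of Remark \ref{rem: rem}. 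To obtain a bound valid on all of $(-\infty,T)$, I would instead bound the two affine-invariant factors by universal constants: the affine isoperimetric inequality gives $\Omega_1^3/A\le 8\pi^2$, while the monotonicity of $A(K_t)A(K_t^\ast)$ from Remark \ref{rem: rem}, combined with Andrews' result that the forward limit is an ellipse (for which $A\,A^\ast=\pi^2$), forces $A(K_t)A(K_t^\ast)\le\pi^2$ throughout. Both factors being uniformly bounded, together with the scale-invariance of the ratio established above, yields \eqref{ie: uniform para1} on $(-\infty,T)$ with a universal constant $c$.

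The main obstacle is the upper bound on $A(K_t)A(K_t^\ast)$: unlike $\Omega_1^3/A$, it is not controlled by a purely local affine-isoperimetric inequality, so one must use its monotonicity under the flow together with the finiteness of its forward limit (or, alternatively, invoke the Blaschke--Santal\'o inequality at the Santal\'o point), rather than the time-$0$ value that sufficed in the preceding lemma. Once this single upper bound is in place, the passage from the unnormalized estimate on $(-\infty,0]$ to the normalized estimate on the full interval $(-\infty,T)$ is immediate.
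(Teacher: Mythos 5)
Your proposal is correct and rests on exactly the same observation as the paper's one-line proof: the normalization $\tilde{K}_t=\sqrt{\pi/A(K_t)}\,K_t$ is a time-dependent homothety, under which $\sigma$ picks up a spatially constant factor $\lambda(t)^{4/3}$, so the ratio $\sigma_M/\sigma_m$ is unchanged and the estimate \eqref{ie: sigma ratio} transfers verbatim to the normalized solution. Where you go beyond the paper is in handling the interval: the paper states the bound on $(-\infty,T)$ but its proof only invokes \eqref{ie: sigma ratio}, which was established on $(-\infty,0]$ by evaluating the monotone right-hand side at $t=0$. Your substitute --- bounding $\Omega_1^3/A$ by $8\pi^2$ via the affine isoperimetric inequality, and $A(K_t)A(K_t^\ast)$ by $\pi^2$ using its monotonicity together with Andrews' forward convergence to an ellipse --- is valid and yields a genuinely universal constant on all of $(-\infty,T)$, which is cleaner than the paper's implicit appeal to the time-$0$ value. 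This refinement is not needed elsewhere in the paper (the later arguments only use the estimate on $(-\infty,0]$ or on backward sequences $t_k\to-\infty$), but it does close a small gap in the statement as written.
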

\begin{proof}
The estimate (\ref{ie: sigma ratio}) is scaling invariant. Therefore, the same estimate holds for the normalized solution.
\end{proof}
\begin{corollary}\label{cor: upper bound of invers ratio}
There exists a constant $0<b<\infty$ such that on $(-\infty, 0]$ we have $\frac{1}{\Omega_2^{4}(t)}<b.$
\end{corollary}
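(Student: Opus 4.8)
The plan is to show that $\Omega_2$ is bounded below by a positive constant on $(-\infty,0]$, which is equivalent to the claim. Recall that $\Omega_2(t)=\int_{\gamma_t}\sigma^{-1/2}\,d\mathfrak{s}$ and that $\Omega_2^4$ is precisely the $2$-affine isoperimetric ratio, hence invariant under $GL(2)$ and in particular under scaling. The starting observation is that $\sigma^{-1/2}$ is the geometric mean of $\sigma$ and $\sigma^{-2}$, together with the two identities $\int_{\gamma_t}\sigma\,d\mathfrak{s}=2A$ and $\int_{\gamma_t}\sigma^{-2}\,d\mathfrak{s}=2A^{\ast}$ established earlier. Thus $\Omega_2$ interpolates between the area and the polar area, and the task is to produce a \emph{lower} bound for this interpolated quantity.

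First I would apply a reverse Cauchy--Schwarz inequality of P\'olya--Szeg\H{o} type. Writing $f=\sigma^{1/2}$ and $g=\sigma^{-1}$, so that $f^2=\sigma$, $g^2=\sigma^{-2}$ and $fg=\sigma^{-1/2}$, the relevant P\'olya--Szeg\H{o} constant is governed by $(\sigma_M/\sigma_m)^{3/2}\le c^{3/2}$, where $c$ is the constant from (\ref{ie: sigma ratio}). The inequality then yields
\begin{equation*}
\Omega_2^2(t)=\left(\int_{\gamma_t}\sigma^{-1/2}\,d\mathfrak{s}\right)^2\ge\frac{4\left(\int_{\gamma_t}\sigma\,d\mathfrak{s}\right)\left(\int_{\gamma_t}\sigma^{-2}\,d\mathfrak{s}\right)}{\left(c^{3/4}+c^{-3/4}\right)^2}=\frac{16\,A(K_t)A(K_t^{\ast})}{\left(c^{3/4}+c^{-3/4}\right)^2}.
\end{equation*}
As a sanity check, for the disk $\sigma$ is constant, $c=1$, and this reduces to the sharp identity $\Omega_2=2\pi$. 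This step is exactly where the ratio estimate (\ref{ie: sigma ratio}) enters: without control of the oscillation of $\sigma$ no such reverse inequality can hold.

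It remains to bound the affine-invariant volume product $A(K_t)A(K_t^{\ast})$ from below by a positive constant independent of $t$, and this is the main obstacle. The monotonicity $\frac{d}{dt}\big(A A^{\ast}\big)\ge0$ from Remark \ref{rem: rem} is of no help here, since it only locates the infimum of $A A^{\ast}$ in the backwards limit $t\to-\infty$. Instead I would invoke the lower bound of planar Blaschke--Santal\'o theory: since the origin lies in the interior of every $K_t$ (it is the common contraction point) and the Santal\'o point $z_S$ minimises the polar area, one has $A(K_t)A(K_t^{\ast})\ge A(K_t)A((K_t-z_S)^{\ast})$, and the latter is bounded below by a universal positive constant $m_0$ by the (two-dimensional) Mahler inequality. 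Combining this with the displayed estimate gives $\Omega_2^2(t)\ge 16\,m_0\,(c^{3/4}+c^{-3/4})^{-2}$ uniformly on $(-\infty,0]$, whence $\Omega_2^{-4}$ is bounded above by the constant $b=\big(c^{3/4}+c^{-3/4}\big)^4/(16\,m_0)^2$.
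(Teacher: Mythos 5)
Your proof is correct, but it follows a genuinely different route from the paper's. The paper first uses Cauchy--Schwarz and H\"{o}lder to get $\Omega_2(t)\ge\big(\Omega_1^3(t)/(2A(t))\big)^{1/2}$, thereby reducing the problem to a lower bound on the $1$-affine isoperimetric ratio; it then obtains that bound by combining the oscillation estimate (\ref{ie: sigma ratio}) with the monotonicity of $\tilde{\Omega}_1$ to deduce $\tilde{\sigma}_m(t)\ge 2\pi/(c\,\tilde{\Omega}_1(0))$, and finally uses John's ellipsoid to control $\tilde{s}<4$ so that this becomes a pointwise lower bound on $\tilde{\mathfrak{r}}^{2/3}$, which integrates to a lower bound on $\tilde{\Omega}_1$. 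You instead run a \emph{reverse} Cauchy--Schwarz (P\'olya--Szeg\H{o}) argument, whose constant is controlled by exactly the same oscillation estimate (\ref{ie: sigma ratio}), to get $\Omega_2^2\gtrsim A(K_t)A(K_t^{\ast})$ directly, and then close with Mahler's reverse Santal\'o inequality in the plane (your observation that the monotonicity in Remark \ref{rem: rem} points the wrong way here, so that a genuine external lower bound on the volume product is required, is accurate; the reduction to the Santal\'o point is also legitimate since the origin lies in the interior of every $K_t$). Your approach buys a shorter and more self-contained computation --- no normalization, no John's ellipsoid --- at the price of importing Mahler's inequality. What it does \emph{not} buy is the uniform lower bound on $\tilde{\sigma}_m$, which the paper extracts as a byproduct of its proof and explicitly reuses later in the proof of Corollary \ref{cor: limit of sigma} (``we have already established the existence of a uniform lower bound for $\tilde{\sigma}_m$''), as well as the displayed estimate (\ref{e:11}) cited there; if your proof replaced the paper's, those two facts would have to be re-derived separately (e.g., the $\tilde{\sigma}_m$ bound still follows from (\ref{ie: uniform para1}) together with $\frac{1}{2}\tilde{\sigma}_M\tilde{\Omega}_1=\pi$-type identities, but it would no longer be on record).
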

\begin{proof}
By the H\"{o}lder inequality:
\[\Omega_2(t)\Omega_1^{\frac{1}{2}}(t)(2A(t))^{\frac{1}{2}}\geq \Omega_2(t)
\int_{\gamma_t}\sigma^{\frac{1}{2}}d\mathfrak{s}=\int_{\gamma_t}\sigma^{-\frac{1}{2}}d\mathfrak{s}
\int_{\gamma_t}\sigma^{\frac{1}{2}}d\mathfrak{s}
\geq \Omega_1^2(t),\]
so
\[\Omega_2(t)\geq \left(\frac{\Omega_1^3(t)}{2A(t)}\right)^{\frac{1}{2}}=\left(\frac{\tilde{\Omega}_1^3(t)}{2\tilde{A}(t)}\right)^{\frac{1}{2}}.\]
Hence, to bound $\Omega_2(t)$ from below, it is enough to bound $\left(\frac{\Omega_1^3(t)}{2A(t)}\right)^{\frac{1}{2}}$ from below. Since
both $\Omega_2(t)$ and $\frac{\Omega_1^3(t)}{2A(t)}$ are invariant under $GL(2)$, we need only to prove the claim after applying appropriate $SL(2)$ transformations to the normalized solution of the flow.

By the estimate (\ref{ie: uniform para1}) and the facts that $\Omega_1(\tilde{K}_{t})=\pi^{\frac{1}{3}}\frac{\Omega_1(K_{t})}{A^{\frac{1}{3}}(K_t)}$ is non-decreasing (see Remark \ref{rem: rem}), and $A(\tilde{K}_t)=\pi$ we have
\[\frac{c}{2}\tilde{\sigma}_m(t)\tilde{\Omega}_1(0)\geq\frac{1}{2}\tilde{\sigma}_M(t)\tilde{\Omega}_1(0)\geq\frac{1}{2}\tilde{\sigma}_M(t)\tilde{\Omega}_1(t)\geq
\tilde{ A}(t)=\pi.\]
So we get $\tilde{s}\tilde{\mathfrak{r}}^{\frac{1}{3}}(t)\geq \frac{2\pi}{c\tilde{\Omega}_1(0)}$ on $(-\infty,0]$.

On the other hand, recall that under the area restriction $A(\tilde{K}_t)=\pi$, and in the light of John's ellipsoid lemma, we can apply a special linear transformation at each time such that the diameter of $\tilde{K}_t$, $D(\tilde{K}_t):=\max\limits_{z\in\mathbb{S}^1}(s_{\tilde{K}_t}(z)+s_{\tilde{K}_t}(-z)),$ is bounded from above by $4.$ Therefore, as $\tilde{s}(t)>0$ (notice that the origin of the plane belongs to int $\tilde{K}_t$), we have, after applying a special linear transformation at each time, that $\tilde{s}(t)<4.$
So, in view of $\tilde{s}\tilde{\mathfrak{r}}^{\frac{1}{3}}(t)\geq \frac{2\pi}{c\tilde{\Omega}_1(0)}$, and that the affine support function is  $SL(2)$-invariant under, we get
$$4\tilde{\mathfrak{r}}^{\frac{1}{3}}(t)\geq\tilde{s}\tilde{\mathfrak{r}}^{\frac{1}{3}}(t)\geq \frac{2\pi}{c\tilde{\Omega}_1(0)}\Rightarrow
\tilde{\mathfrak{r}}^{\frac{2}{3}}(t)\geq\left(\frac{\pi}{2c\tilde{\Omega}_1(0)}\right)^2,$$
and thus
\begin{align}\label{e:11}
\Omega_2^2(t)\geq\frac{\tilde{\Omega}_1^3(t)}{2\tilde{A}(t)}=\frac{(\int_{\mathbb{S}^1}\tilde{\mathfrak{r}}^{2/3}d\theta)^3}{2\pi}\geq
(2\pi)^2\left(\frac{\pi}{2c\tilde{\Omega}_1(0)}\right)^6.
\end{align}
\end{proof}
\begin{lemma}\label{cor: liminf idea} If $K_t$ evolves by (\ref{e: affine def of flow}), then the following limit holds:
\begin{equation}
 \liminf_{t\to -\infty}\left(\frac{A(t)}{\Omega_1(t)\Omega_2^{5}(t)}\right)
\int_{\gamma_t}\left(\sigma^{-\frac{1}{4}}\right)_{\mathfrak{s}}^2d\mathfrak{s}=0.
 \label{eq:sup}
 \end{equation}
\end{lemma}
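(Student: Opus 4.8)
The plan is to convert the spatial integral appearing in \eqref{eq:sup} into a time derivative of $\Omega_2$ and then run a contradiction argument against the growth of the area. First I would specialize Lemma \ref{lem: controlling derivative of $l$-affine surface area along affine normal flow} to $l=2$. Since the coefficient $\frac{2(l-2)}{l+2}$ vanishes there, only the gradient term survives and one obtains
\[
\frac{d}{dt}\Omega_2(t)=\frac{3}{4}\int_{\gamma_t}\sigma^{-\frac{5}{2}}(\sigma_{\mathfrak{s}})^2\,d\mathfrak{s}.
\]
On the other hand $\left(\sigma^{-\frac14}\right)_{\mathfrak{s}}=-\tfrac14\sigma^{-\frac54}\sigma_{\mathfrak{s}}$, so $\left(\sigma^{-\frac14}\right)_{\mathfrak{s}}^2=\tfrac{1}{16}\sigma^{-\frac52}(\sigma_{\mathfrak{s}})^2$, and comparing the two displays gives the exact identity
\[
\int_{\gamma_t}\left(\sigma^{-\frac14}\right)_{\mathfrak{s}}^2\,d\mathfrak{s}=\frac{1}{12}\,\frac{d}{dt}\Omega_2(t).
\]
Thus the quantity under the $\liminf$ equals $\tfrac{1}{12}\,\dfrac{A(t)\,\Omega_2'(t)}{\Omega_1(t)\,\Omega_2^{5}(t)}$, and it suffices to show this cannot stay bounded away from $0$ as $t\to-\infty$.

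Next I would argue by contradiction: suppose the $\liminf$ is positive. Then, absorbing the constant $\tfrac1{12}$, there exist $\epsilon>0$ and a time $T_0\le 0$ such that $\dfrac{A(t)\,\Omega_2'(t)}{\Omega_1(t)\,\Omega_2^{5}(t)}\ge\epsilon$ for all $t<T_0$. All four factors are positive ($\Omega_2$ is nondecreasing by the formula above, and $A,\Omega_1>0$), so this rearranges to
\[
\frac{\Omega_2'(t)}{\Omega_2^{5}(t)}\ge \epsilon\,\frac{\Omega_1(t)}{A(t)}.
\]
Here I invoke part (2) of Lemma \ref{e: basic ev}, namely $A'(t)=-\Omega_1(t)$, to identify the right-hand side as $-\epsilon\,\frac{d}{dt}\ln A(t)$.

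Finally I would integrate this differential inequality over $[t,T_0]$ for $t<T_0$. The left-hand side telescopes, $\int_t^{T_0}\Omega_2^{-5}\Omega_2'\,ds=\tfrac14\big(\Omega_2^{-4}(t)-\Omega_2^{-4}(T_0)\big)\le \tfrac14\Omega_2^{-4}(t)$, which is bounded above by $\tfrac{b}{4}$ thanks to Corollary \ref{cor: upper bound of invers ratio}. The right-hand side integrates to $\epsilon\,\ln\frac{A(t)}{A(T_0)}$, which by Proposition \ref{prop: area backward limit} diverges to $+\infty$ as $t\to-\infty$. Comparing, $\tfrac{b}{4}\ge \epsilon\,\ln\frac{A(t)}{A(T_0)}$ for all $t<T_0$ is impossible, the contradiction we seek. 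Since the expression in \eqref{eq:sup} is nonnegative, its $\liminf$ is therefore $0$.

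The computation is short and contains no single hard estimate; the real content lies in the first step, recognizing that the weighted gradient integral is (up to the constant $\tfrac{1}{12}$) precisely $\frac{d}{dt}\Omega_2$. Once this is in hand, the whole argument is a tug-of-war between two earlier facts — the uniform lower bound on $\Omega_2$ from Corollary \ref{cor: upper bound of invers ratio} and the blow-up $A(t)\to\infty$ from Proposition \ref{prop: area backward limit} — and the only place demanding care is bookkeeping the signs and the direction of integration, since $A$ decreases while $\Omega_2$ increases along the flow.
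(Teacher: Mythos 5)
Your proof is correct and follows essentially the same route as the paper: both convert the weighted gradient integral into $\frac{d}{dt}\Omega_2$ via Lemma \ref{lem: controlling derivative of $l$-affine surface area along affine normal flow} with $l=2$, and then derive a contradiction by integrating against $\frac{d}{dt}\ln A(t)$, using the lower bound on $\Omega_2$ from Corollary \ref{cor: upper bound of invers ratio} and $A(t)\to\infty$ from Proposition \ref{prop: area backward limit}. The only difference is that you make the constant ($\tfrac{1}{12}$, hence $\delta=48$) explicit where the paper leaves it as an unspecified $\delta>0$.
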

\begin{proof}
By Lemma \ref{lem: controlling derivative of $l$-affine surface area along affine normal flow} for $l=2$, and by $\frac{d}{dt}A(t)=-\Omega_1(t)$, we find
$$\frac{d}{dt}\frac{1}{\Omega_2^{4}(t)}= \delta
\frac{d}{dt}\ln(A(t))\left[\left(\frac{A(t)}{\Omega_1(t)\Omega_2^{5}(t)}\right)
\int_{\gamma_t}\left(\sigma^{-\frac{1}{4}}\right)_{\mathfrak{s}}^2d\mathfrak{s}\right],$$
for a constant $\delta>0.$
Suppose on the contrary that there exists an $\varepsilon>0$ small enough such that
$$\left(\frac{A(t)}{\Omega_1(t)\Omega_2^{5}(t)}\right)
\int_{\gamma_t}\left(\sigma^{-\frac{1}{4}}\right)_{\mathfrak{s}}^2d\mathfrak{s}\geq \frac{\varepsilon}{\delta}$$
on $(-\infty, -N]$ for an $N>0$ large enough. Thus
$
\frac{d}{dt}\frac{1}{\Omega_2^{4}(t)}
\leq\varepsilon\frac{d}{dt}\ln(A(t)).
$
Integrating this last inequality on the time interval $[t,-N]$ and using Corollary \ref{cor: upper bound of invers ratio} we get
\begin{align*}
0<\frac{1}{\Omega_2^{4}(-N)}&\leq \frac{1}{\Omega_2^{4}(t)}+\varepsilon \ln(A(-N))-\varepsilon \ln(A(t))\\
&\leq b+\varepsilon \ln(A(-N))-\varepsilon \ln(A(t)).
\end{align*}
We reach to a contradiction by letting $t\to-\infty$: Since $\lim\limits_{t\to-\infty}A(t)=\infty$ (Proposition \ref{prop: area backward limit}), so the right-hand side becomes negative.
\end{proof}
\begin{corollary}\label{cor: limit of sigma}
There is a sequence of times $\{t_k\}_{k\in\mathbb{N}}$ and there is a constant $0<\zeta<\infty$ such that as $t_k\to-\infty$, then
$\lim\limits_{t_k\to-\infty}\max\limits_{\tilde{\gamma}_{t_k}}|\tilde{\sigma}(t_k)-\zeta|=0.$
\end{corollary}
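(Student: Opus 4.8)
The plan is to transport the vanishing $\liminf$ of Lemma~\ref{cor: liminf idea} to the normalized flow, deduce from it that $\int_{\tilde\gamma_{t_k}}(\tilde\sigma^{-1/4})_{\mathfrak s}^2\,d\mathfrak s\to 0$ along a sequence $t_k\to-\infty$, and then upgrade this $L^2$-smallness of the derivative into uniform convergence of $\tilde\sigma$ to a finite positive constant.

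First I would record that the expression inside \eqref{eq:sup}, namely $\frac{A}{\Omega_1\Omega_2^5}\int_{\gamma_t}(\sigma^{-1/4})_{\mathfrak s}^2\,d\mathfrak s$, is invariant under homotheties $K\mapsto\lambda K$. Indeed $A(\lambda K)=\lambda^2A(K)$, $\Omega_1(\lambda K)=\lambda^{2/3}\Omega_1(K)$, $\Omega_2(\lambda K)=\Omega_2(K)$, while $\sigma(\lambda K)=\lambda^{4/3}\sigma(K)$ and $d\mathfrak s(\lambda K)=\lambda^{2/3}\,d\mathfrak s(K)$; a direct count of the powers of $\lambda$ shows they all cancel. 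Hence the same quantity written for the normalized solution equals the one for $K_t$ at every fixed time, and Lemma~\ref{cor: liminf idea} supplies a sequence $t_k\to-\infty$ with $\frac{\tilde A}{\tilde\Omega_1\tilde\Omega_2^5}\int_{\tilde\gamma_{t_k}}(\tilde\sigma^{-1/4})_{\mathfrak s}^2\,d\mathfrak s\to0$. Since $\tilde A=\pi$, since $\tilde\Omega_1$ is non-decreasing (Remark~\ref{rem: rem}) and so $\tilde\Omega_1(t)\le\tilde\Omega_1(0)$ on $(-\infty,0]$, and since the affine isoperimetric inequality gives $\tilde\Omega_2\le2\pi$, the prefactor stays bounded below by a positive constant; therefore $\int_{\tilde\gamma_{t_k}}(\tilde\sigma^{-1/4})_{\mathfrak s}^2\,d\mathfrak s\to0$.

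Next I would pass from this to oscillation control. By the fundamental theorem of calculus and Cauchy--Schwarz,
\[
\max_{\tilde\gamma_{t_k}}\tilde\sigma^{-1/4}-\min_{\tilde\gamma_{t_k}}\tilde\sigma^{-1/4}\le\int_{\tilde\gamma_{t_k}}\bigl|(\tilde\sigma^{-1/4})_{\mathfrak s}\bigr|\,d\mathfrak s\le\tilde\Omega_1(0)^{1/2}\Bigl(\int_{\tilde\gamma_{t_k}}(\tilde\sigma^{-1/4})_{\mathfrak s}^2\,d\mathfrak s\Bigr)^{1/2}\longrightarrow0,
\]
so $\tilde\sigma^{-1/4}$ is uniformly close to a (time-dependent) constant along $t_k$. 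To turn this into convergence to a fixed $\zeta$, I must confine $\tilde\sigma$ to a fixed interval $[a,b]\subset(0,\infty)$. The lower bound $\tilde\sigma\ge\frac{2\pi}{c\tilde\Omega_1(0)}$ from the proof of Corollary~\ref{cor: upper bound of invers ratio} gives a uniform upper bound for $\tilde\sigma^{-1/4}$; inserting it into $\tilde\Omega_2=\int_{\tilde\gamma_t}\tilde\sigma^{-1/2}\,d\mathfrak s\le(\max\tilde\sigma^{-1/4})^2\,\tilde\Omega_1$ together with the lower bound \eqref{e:11} for $\tilde\Omega_2$ produces a positive lower bound for $\tilde\Omega_1$; finally $\pi=\frac12\int_{\tilde\gamma_t}\tilde\sigma\,d\mathfrak s\ge\frac1{2c}\,\tilde\sigma_M\,\tilde\Omega_1$ combined with \eqref{ie: uniform para1} furnishes a uniform upper bound for $\tilde\sigma$, i.e.\ a positive lower bound for $\tilde\sigma^{-1/4}$.

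With $\tilde\sigma^{-1/4}$ now ranging in a fixed $[a,b]\subset(0,\infty)$ and its oscillation tending to $0$, I would pass to a further subsequence (still called $t_k$) along which $\min_{\tilde\gamma_{t_k}}\tilde\sigma^{-1/4}\to u_\infty\in[a,b]$; the oscillation bound then forces $\tilde\sigma^{-1/4}\to u_\infty$ uniformly, whence $\tilde\sigma\to u_\infty^{-4}=:\zeta\in(0,\infty)$ uniformly, which is exactly the claim. I expect the middle part to be the real obstacle: the $\liminf$ only delivers smallness of the derivative in $L^2$ along a sequence, and converting this into uniform proximity to a genuine finite nonzero constant requires closing, without circularity, the chain of a priori estimates --- a lower bound for $\tilde\sigma$, then via $\tilde\Omega_2$ a lower bound for $\tilde\Omega_1$, and finally an upper bound for $\tilde\sigma$.
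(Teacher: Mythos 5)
Your proof is correct and follows essentially the same route as the paper: transport the vanishing $\liminf$ of Lemma~\ref{cor: liminf idea} to the normalized flow via scale invariance, bound the prefactor to extract $\int_{\tilde\gamma_{t_k}}(\tilde\sigma^{-1/4})_{\tilde{\mathfrak s}}^2\,d\tilde{\mathfrak s}\to 0$, control the oscillation of $\tilde\sigma^{-1/4}$ by Cauchy--Schwarz, pin $\tilde\sigma$ between uniform positive bounds, and pass to a subsequence. The only cosmetic deviations are that you bound $\tilde\Omega_2$ above by the $L_2$ affine isoperimetric inequality where the paper uses the monotonicity of $\Omega_2$ from Lemma~\ref{lem: controlling derivative of $l$-affine surface area along affine normal flow}, and you derive the upper bound on $\tilde\sigma$ directly where the paper argues by contradiction; both are valid.
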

\begin{proof}
Notice that the quantity $\left(\frac{A(t)}{\Omega_1(t)\Omega_2^{5}(t)}\right)\int_{\gamma_t}\left(\sigma^{-\frac{1}{4}}\right)_{\mathfrak{s}}^2d\mathfrak{s}$ is $GL(2)$ invariant and  also $\frac{\tilde{A}(t)}{\tilde{\Omega}_1(t)\tilde{\Omega}_2^{5}(t)}$ is bounded from below on $(-\infty,0]$: In fact, $\tilde{A}(t)=\pi$ and both quantities $\tilde{\Omega}_1(t)$
and $\tilde{\Omega}_2^{5}(t)$  are non-decreasing along the normalized flow thus $\tilde{\Omega}_1(t)\leq \tilde{\Omega}_1(0)$ and $\tilde{\Omega}_2^{5}(t)\le \tilde{\Omega}_2^{5}(0).$
Hence, Lemma \ref{cor: liminf idea} implies
that there exists a sequence of times $\{t_k\}$ such that $\lim\limits_{k\to\infty}t_k=-\infty$ while
$\lim\limits_{t_k\to-\infty}\int_{\tilde{\gamma}_{t_k}}
\left(\tilde{\sigma}^{-\frac{1}{4}}\right)^2_{\tilde{\mathfrak{s}}}d\tilde{\mathfrak{s}}=0.$
On the other hand, by the H\"{o}lder inequality
\begin{align*}
\frac{\left(\tilde{\sigma}_M^{-\frac{1}{4}}(t_k)-\tilde{\sigma}_m^{-\frac{1}{4}}(t_k)\right)^2}
{\tilde{\Omega}_1(t_k)}\leq \int_{\tilde{\gamma}_{t_k}}
\left(\tilde{\sigma}^{-\frac{1}{4}}\right)^2_{\tilde{\mathfrak{s}}}d\tilde{\mathfrak{s}}.
\end{align*}
So by boundedness of $\tilde{\Omega}_1$ from above we find that
$\lim\limits_{t_k\to-\infty}\left(\tilde{\sigma}_M^{-\frac{1}{4}}(t_k)-
\tilde{\sigma}_m^{-\frac{1}{4}}(t_k)\right)^2=0.$
To complete the proof, we need to exclude the possibility that $\limsup\limits_{t_k\to-\infty}\tilde{\sigma}_m(t_k)=\infty$
(Recall that in the proof of Corollary \ref{cor: upper bound of invers ratio},
we have already established the existence of a uniform lower bound for $\tilde{\sigma}_m$.). Suppose on the contrary $\limsup\limits_{t_k\to-\infty}\tilde{\sigma}_m(t_k)=\infty$.
In this case, (\ref{e:11}) gives
\[\pi=\tilde{A}(t_k)=\lim_{t_k\to-\infty}\tilde{A}(t_k)\geq \frac{1}{2}\left(\limsup\limits_{t_k\to-\infty}\tilde{\sigma}_m(t_k)\right)\tilde{\Omega}_1(t_k)=\infty.\]
This is a contradiction. So $\{\tilde{\sigma}_m\}$ is uniformly bounded from above and below. By passing to a further subsequence of $\{t_k\}$, if necessary, we obtain the desired result.
\end{proof}
\begin{lemma}\label{lem: selection}
For each time $t\in (-\infty, T)$, let $\Phi_t\in SL(2)$ be a special linear transformation such that the minimal ellipse containing $\Phi_t\tilde{K}_t$ is a disk. Then for an $N>0$ large enough, there holds
\[c_2 \frac{\zeta^{\frac{1}{3}}(2\zeta^{\frac{1}{3}})^{\frac{-3}{4}}}{2}\leq s_{\Phi_{t_k}\tilde{K}_{t_k}}\leq c_1 (2\zeta^{\frac{1}{3}})^{\frac{1}{4}}\]
 on the time interval $(-\infty,-N)$, for some absolute constants $0<c_1,c_2<\infty.$
\end{lemma}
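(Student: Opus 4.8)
The plan is to reduce everything to two absolute inputs: the roundness produced by $\Phi_{t_k}$ together with the area normalization, and the pointwise pinching $\sigma\approx\zeta$ coming from Corollary \ref{cor: limit of sigma}. First I would record the consequences of roundness. Writing $M_k:=\Phi_{t_k}\tilde K_{t_k}$, the hypothesis that the minimal circumscribed ellipse of $M_k$ is a disk $B_{R_k}(p_k)$ gives, by John's lemma in the plane, $B_{R_k/2}(p_k)\subseteq M_k\subseteq B_{R_k}(p_k)$; comparing areas with $A(M_k)=A(\tilde K_{t_k})=\pi$ (area being $SL(2)$-invariant) forces $1\le R_k\le 2$. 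Hence $\mathrm{diam}\,M_k\le 4$, and since $0\in M_k\subseteq B_{R_k}(p_k)$ we have $|p_k|\le R_k$ and therefore $s_{M_k}(z)=\max_x\langle x,z\rangle\le|p_k|+R_k\le 4$. Moreover every boundary point $X$ satisfies $|X|\le 2R_k\le 4$, so $|s_{M_k}'|=|\langle X,z'\rangle|\le 4$; that is, $s_{M_k}$ is $4$-Lipschitz in $\theta$. Since $\sigma$, $A$, $A^{\ast}$ and $\Omega_1$ are all $SL(2)$-invariant, I may read off $\sigma_{M_k}=\tilde\sigma(t_k)$, $A^{\ast}(M_k)=\tilde A^{\ast}(t_k)$, and so on; and by Corollary \ref{cor: limit of sigma}, enlarging $N$ if necessary, $\tfrac{\zeta}{2}\le\sigma_{M_k}\le 2\zeta$ pointwise for all $t_k<-N$.

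The bound $s_{M_k}\le 4$ already yields the upper estimate once $\zeta$ is confined to an absolute interval. For the lower bound on $\zeta$ I would use the affine isoperimetric inequality $\Omega_1^3\le 8\pi^2 A$: with $A=\pi$ this reads $\Omega_1\le 2\pi$, while $2\pi=2A=\int\sigma\,d\mathfrak s\le\sigma_M\Omega_1\le 2\pi\,\sigma_M$ forces $\sigma_M\ge 1$ and hence, letting $t_k\to-\infty$, $\zeta\ge 1$. For the upper bound on $\zeta$ I would use the pointwise identity $\mathfrak r=(\sigma/s)^3\ge(\zeta/8)^3$ (from $\sigma\ge\zeta/2$ and $s\le 4$) together with $\int_0^{2\pi}\mathfrak r\,d\theta=L\le\pi\,\mathrm{diam}\le 4\pi$, which gives $2\pi(\zeta/8)^3\le 4\pi$, i.e. $\zeta\le 8\cdot 2^{1/3}$. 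With $\zeta\in[1,\,8\cdot2^{1/3}]$ the factor $(2\zeta^{1/3})^{1/4}$ is comparable to $1$, so $s_{M_k}\le 4$ rewrites as $s_{M_k}\le c_1(2\zeta^{1/3})^{1/4}$ for an absolute $c_1$.

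The crux is the lower bound on $s_{M_k}$, i.e. keeping the origin uniformly inside $M_k$; here roundness alone is useless, because it places no upper bound on the radius of curvature and hence no lower bound on $s$. The resolution is to control the polar area. Using $A^{\ast}=\tfrac12\int\sigma^{-2}\,d\mathfrak s$ with $\sigma\ge\zeta/2\ge 1/2$ and $\Omega_1\le 2\pi$, I obtain $2A^{\ast}\le 4\zeta^{-2}\Omega_1\le 8\pi$, so $A^{\ast}(M_k)\le 4\pi$. Feeding this into the Lipschitz bound: if $m:=\min_\theta s_{M_k}$ is attained at $\theta_0$, then $s_{M_k}\le m+4|\theta-\theta_0|\le 2m$ on the window $|\theta-\theta_0|\le m/4$, whence $\int s_{M_k}^{-2}\,d\theta\ge\tfrac{1}{8m}$; since $\int s_{M_k}^{-2}\,d\theta=2A^{\ast}\le 8\pi$, this forces $m\ge\tfrac{1}{64\pi}$. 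Thus $s_{M_k}\ge\tfrac{1}{64\pi}$, an absolute positive constant, and with $\zeta$ bounded above this rewrites as $s_{M_k}\ge c_2\,\tfrac{\zeta^{1/3}(2\zeta^{1/3})^{-3/4}}{2}$ for an absolute $c_2$.

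I expect the genuine obstacle to be exactly this lower bound, and more broadly the insistence on \emph{absolute} constants: every estimate must avoid the solution-dependent quantities (such as $\tilde\Omega_1(0)$ and the $c$ of the earlier lemmas) that appear in the previous bounds. The three ingredients that make the constants absolute are the area normalization $\tilde A=\pi$, the affine isoperimetric inequality $\Omega_1\le 2\pi$, and the uniform two-sided control $\sigma\in[\zeta/2,2\zeta]$ from Corollary \ref{cor: limit of sigma}; the polar-area computation is what converts these into a lower bound on $s_{M_k}$, hence into the uniform interiority of the origin, without first having to identify the backward limit.
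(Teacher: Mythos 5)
Your argument is correct, but it takes a genuinely different route from the paper: the paper's entire proof is a one-line citation of Corollary~2.4 of Lu--Wang \cite{XW} (an a priori $C^0$ estimate for convex bodies whose affine support function is pinched between two constants and whose L\"owner ellipse is a ball), combined with Corollary \ref{cor: limit of sigma}, which is where the peculiar exponents $(2\zeta^{1/3})^{1/4}$ and $(2\zeta^{1/3})^{-3/4}$ in the statement come from. You replace that citation with a self-contained elementary argument, and each step checks out: the dual form of John's lemma plus $A(M_k)=\pi$ gives $1\le R_k\le 2$, hence $s_{M_k}\le 4$ and the $4$-Lipschitz bound; the affine isoperimetric inequality $\Omega_1^3\le 8\pi^2 A$ together with $2A=\int\sigma\,d\mathfrak{s}$ pins $\zeta\ge 1$, while $\mathfrak{r}=(\sigma/s)^3$ and $L=\int\mathfrak{r}\,d\theta\le\pi\,\mathrm{diam}$ pin $\zeta\le 8\cdot 2^{1/3}$; and the polar-area bound $2A^{\ast}=\int s^{-2}d\theta\le 8\pi$ fed into the Lipschitz window argument yields the uniform lower bound $s_{M_k}\ge\frac{1}{64\pi}$. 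You correctly identify the lower bound on $s$ (uniform interiority of the origin) as the nontrivial content, which is exactly what the cited Lu--Wang estimate supplies in the paper. What your version buys is transparency and explicit absolute constants, plus the extra information that $\zeta$ itself lies in the absolute interval $[1,\,8\cdot 2^{1/3}]$ (not needed for the statement, but it is what lets you trade the $\zeta$-dependent bounds of the statement for purely numerical ones); what the paper's version buys is brevity, at the cost of importing the specific constants of an external result.
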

\begin{proof}
The claim follows from Corollary \ref{cor: limit of sigma} and Corollary 2.4 of \cite{XW}.
\end{proof}
\section{Proof of the main Theorem}
\begin{proof}
By Lemma \ref{lem: selection} and by the Blaschke selection theorem, there is subsequence of times, denoted again by $\{t_k\}$, such that
$\{\Phi_{t_k}\tilde{K}_{t_k}\}$ converges in the Hausdorff distance to a convex body $\tilde{K}_{-\infty}$, having the origin in its interior, as $t_k\to-\infty.$ By Corollary \ref{cor: limit of sigma}, and by the weak convergence
of the Monge-Amp\`{e}re measures, the support function of $\tilde{K}_{-\infty}$ is the generalized solution of the following Monge-Amp\`{e}re equation on $\mathbb{S}^1$:
\[s^3(s_{\theta\theta}+s)=\zeta^3\]
Therefore, by Lemma 8.1 of Petty \cite{Petty}, $\tilde{K}_{-\infty}$ is an origin-centered ellipse. This in turn implies that $A(\tilde{K}_{-\infty})A(\tilde{K}_{-\infty}^{\ast})=\pi^2.$
On the other hand, $\lim_{t\to T}A(\tilde{K}_{t})A(\tilde{K}_{t}^{\ast})=\pi^2$, see Andrews \cite{BA5}. So the monotonicity of $A(\tilde{K}_{t})A
(\tilde{K}_{t}^{\ast})$ yields $A(\tilde{K}_{t})A(\tilde{K}_{t}^{\ast})\equiv \pi^2$ on $(-\infty,T)$ which gives $\frac{d}{dt}A(K_{t})A(K_{t}^{\ast})\equiv 0$ on $(-\infty,T).$
From Remark \ref{rem: rem} it follows that for every time $t$, $K_t$ and is an origin-centered ellipse.
\end{proof}
\textbf{Acknowledgment}
I would like to thank the referee for helpful comments and suggestions.
\bibliographystyle{amsplain}

\end{document}